\tikzset{every picture/.style={thick}}
\newtheorem{theorem}{Theorem}
\newtheorem{proposition}{Proposition}
\newtheorem{observation}{Observation}
\theoremstyle{definition}
\title{Graphs with at most two trees\\ in a forest building process}
\author{Steve Butler\footnote{Department of Mathematics, Iowa State University, Ames, IA 50011 USA\newline {\tt\{butler,hamanaka,hardtme\}@iastate.edu}} \and Misa Hamanaka\footnotemark[1] \and Marie Hardt\footnotemark[1]}
\date{\empty}
\begin{document}
\maketitle

\begin{abstract}
Given a graph, we can form a spanning forest by first sorting the edges in some order, and then only keep edges incident to a vertex which is not incident to any previous edge.  The resulting forest is dependent on the ordering of the edges, and so we can ask, for example, how likely is it for the process to produce a graph with $k$ trees.

We look at all graphs which can produce at most two trees in this process and determine the probabilities of having either one or two trees.  From this we construct infinite families of graphs which are non-isomorphic but produce the same probabilities.
\end{abstract}

\section{Introduction}
We consider the following \emph{forest building process}:
\begin{enumerate}
\item Take all of the edges of the graph, remove them and sort them in some order.
\item Go through the edges and only put those edges back in which connects to some vertex not previously seen by any edge.  
\end{enumerate}
From this, we must end up with a forest (or graph without cycles) since we can never add an edge that closes a cycle. As an example, in Figure~\ref{fig:paw} we list all $24$ different ways to order the edges and group them based on the resulting forest formed.

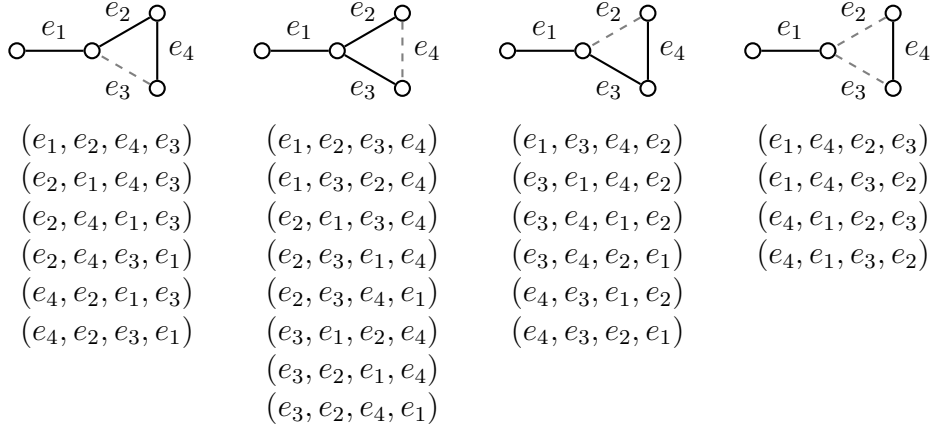
\begin{figure}[h!]
\centering

\hfil\begin{tabular}{c@{\hspace{0.25in}}c@{\hspace{0.25in}}c@{\hspace{0.25in}}c}
\begin{tikzpicture}[scale=1]
\node[draw,circle,inner sep= 2pt] (a1) at (-1,0) {};
\node[draw,circle,inner sep= 2pt] (a2) at (0,0) {};
\node[draw,circle,inner sep= 2pt] (a3) at ({sqrt(3)/2},0.5) {};
\node[draw,circle,inner sep= 2pt] (a4) at ({sqrt(3)/2},-0.5) {};
\draw[dashed,color=white!50!black] (a4)--(a2);
\draw (a1)--(a2)--(a3)--(a4);
\node [above] at (-0.5,0) {$e_1$};
\node [above] at ({sqrt(3)/5},0.25) {$e_2$};
\node [below] at ({sqrt(3)/5},-0.25) {$e_3$};
\node [right] at ({sqrt(3)/2},0) {$e_4$};
\end{tikzpicture}&
\begin{tikzpicture}[scale=1]
\node[draw,circle,inner sep= 2pt] (a1) at (-1,0) {};
\node[draw,circle,inner sep= 2pt] (a2) at (0,0) {};
\node[draw,circle,inner sep= 2pt] (a3) at ({sqrt(3)/2},0.5) {};
\node[draw,circle,inner sep= 2pt] (a4) at ({sqrt(3)/2},-0.5) {};
\draw (a1)--(a2)--(a3) (a4)--(a2);
\draw[dashed,color=white!50!black] (a3)--(a4);
\node [above] at (-0.5,0) {$e_1$};
\node [above] at ({sqrt(3)/5},0.25) {$e_2$};
\node [below] at ({sqrt(3)/5},-0.25) {$e_3$};
\node [right] at ({sqrt(3)/2},0) {$e_4$};
\end{tikzpicture}&
\begin{tikzpicture}[scale=1]
\node[draw,circle,inner sep= 2pt] (a1) at (-1,0) {};
\node[draw,circle,inner sep= 2pt] (a2) at (0,0) {};
\node[draw,circle,inner sep= 2pt] (a3) at ({sqrt(3)/2},0.5) {};
\node[draw,circle,inner sep= 2pt] (a4) at ({sqrt(3)/2},-0.5) {};
\draw (a1)--(a2) (a3)--(a4)--(a2);
\draw[dashed,color=white!50!black] (a3)--(a2);
\node [above] at (-0.5,0) {$e_1$};
\node [above] at ({sqrt(3)/5},0.25) {$e_2$};
\node [below] at ({sqrt(3)/5},-0.25) {$e_3$};
\node [right] at ({sqrt(3)/2},0) {$e_4$};
\end{tikzpicture}&
\begin{tikzpicture}[scale=1]
\node[draw,circle,inner sep= 2pt] (a1) at (-1,0) {};
\node[draw,circle,inner sep= 2pt] (a2) at (0,0) {};
\node[draw,circle,inner sep= 2pt] (a3) at ({sqrt(3)/2},0.5) {};
\node[draw,circle,inner sep= 2pt] (a4) at ({sqrt(3)/2},-0.5) {};
\draw (a1)--(a2) (a3)--(a4);
\draw[dashed,color=white!50!black] (a3)--(a2)--(a4);

\node [above] at (-0.5,0) {$e_1$};
\node [above] at ({sqrt(3)/5},0.25) {$e_2$};
\node [below] at ({sqrt(3)/5},-0.25) {$e_3$};
\node [right] at ({sqrt(3)/2},0) {$e_4$};
\end{tikzpicture}\\
$(e_1,e_2,e_4,e_3)$&$(e_1,e_2,e_3,e_4)$&$(e_1,e_3,e_4,e_2)$&$(e_1,e_4,e_2,e_3)$\\
$(e_2,e_1,e_4,e_3)$&$(e_1,e_3,e_2,e_4)$&$(e_3,e_1,e_4,e_2)$&$(e_1,e_4,e_3,e_2)$\\
$(e_2,e_4,e_1,e_3)$&$(e_2,e_1,e_3,e_4)$&$(e_3,e_4,e_1,e_2)$&$(e_4,e_1,e_2,e_3)$\\
$(e_2,e_4,e_3,e_1)$&$(e_2,e_3,e_1,e_4)$&$(e_3,e_4,e_2,e_1)$&$(e_4,e_1,e_3,e_2)$\\
$(e_4,e_2,e_1,e_3)$&$(e_2,e_3,e_4,e_1)$&$(e_4,e_3,e_1,e_2)$&\\
$(e_4,e_2,e_3,e_1)$&$(e_3,e_1,e_2,e_4)$&$(e_4,e_3,e_2,e_1)$&\\
&$(e_3,e_2,e_1,e_4)$&&\\
&$(e_3,e_2,e_4,e_1)$&&
\end{tabular}
\caption{The results from different edge orderings of the paw graph.}
\label{fig:paw}
\end{figure}

We will consider the problem:  How many different edge orderings produce a given number, say $k$, of trees in the resulting graph. Equivalently, what is the probability that if we take a random ordering of the edges,  we produce a forest with $k$ trees.  We will let $P(G,k)$ denote this probability.  From Figure~\ref{fig:paw}, we see that $P(G,1)=\frac56$ and $P(G,2)=\frac16$  (note that the probabilities need to sum to $1$).

This process was implicitly used in a paper of Butler et al.\ \cite{BCCG} for the complete graph, and explicitly introduced in a paper by Berikkyzy et al.\ \cite{B} where some basic properties were established and the probabilities for complete bipartite graphs were determined.  We summarize these results here.

\begin{theorem}[Butler et al.\ \cite{BCCG}]
We have
$\displaystyle P(K_n,k)=\frac{{n-1\choose n-2k,k,k-1}2^{n-2k}}{{2n-2\choose n}}$.
\end{theorem}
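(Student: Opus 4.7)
The key observation is that in any edge ordering of $K_n$, the number of trees in the resulting forest equals the number of edges $e=\{u,v\}$ that happen to be the first edge of the ordering at \emph{both} endpoints---call such edges \emph{type A}. Indeed, $e$ is type A precisely when both its endpoints are unseen at processing time, so it starts a new tree; every other kept edge has exactly one previously-unseen endpoint and simply extends an existing tree. Let $X$ be the number of type A edges, so $P(K_n,k)=\Pr[X=k]$ under a uniformly random edge ordering.

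My plan is to compute the factorial moments of $X$ and then invert. Fix any matching $M\subseteq E(K_n)$ of $j$ edges and set $\alpha_j:=\Pr[\text{every edge of } M \text{ is type A}]$; by the symmetry of $K_n$ this depends only on $j$, giving
\[
\mathbb{E}\!\left[\binom{X}{j}\right] = \frac{n!}{2^j\,j!\,(n-2j)!}\,\alpha_j.
\]
To compute $\alpha_j$, I would pass to the continuous-time model: assign i.i.d.\ $\mathrm{Uniform}[0,1]$ labels $U_e$ to the edges, so the ordering is determined by their ranking. The event ``every $e_i=\{u_i,v_i\}\in M$ is type A'' becomes ``$U_{e_i}$ is the minimum among the $2n-3$ labels at edges incident to $u_i$ or $v_i$, for each $i$.'' Integrating out the labels $U_{e_i}$---noting that edges outside the closed neighborhood of $V(M)$ are irrelevant, that each non-matching edge with a single endpoint in $V(M)$ contributes one $(1-U_{e_i})$-factor, and that each of the four cross edges between two distinct matching pairs contributes a $\bigl(1-\max(U_{e_i},U_{e_{i'}})\bigr)$-factor---should produce the clean closed form
\[
\alpha_j = \prod_{i=1}^{j} \frac{1}{2n-2i-1}.
\]
(For $j=1$ this is just the probability $1/(2n-3)$ that a given edge precedes all $2n-4$ other edges at its endpoints; small cases like $j=2$ can also be checked directly to confirm the pattern.)

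With the factorial moments in hand, the standard inversion formula $\Pr[X=k]=\sum_{j\ge k}(-1)^{j-k}\binom{j}{k}\mathbb{E}[\binom{X}{j}]$ yields $P(K_n,k)$ as an alternating sum involving products of double factorials. The main obstacle will be showing that this alternating sum collapses to the claimed closed form $\binom{n-1}{n-2k,k,k-1}\,2^{n-2k}/\binom{2n-2}{n}$. I would attack this identity either by a Chu--Vandermonde-type summation after rewriting the double factorials via $(2m-1)!!=(2m)!/(2^m m!)$, or---exploiting that $\binom{2n-2}{n}=(n-1)C_{n-1}$ is Catalan-related---by searching for a direct bijective interpretation of the right-hand side in terms of lattice paths or spanning-tree codes that would bypass the algebraic simplification altogether.
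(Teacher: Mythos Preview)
The paper does not prove this theorem; it is quoted from Butler, Chung, Cummings, and Graham~\cite{BCCG} as background, so there is no in-paper argument to compare against.

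Assessing your outline on its own merits: the reduction to counting ``type~A'' edges and the factorial-moment strategy are both correct, and the formula $\alpha_j=\prod_{i=1}^{j}(2n-2i-1)^{-1}$ is right.  Two comments on the gaps you flag.  First, the $j$-fold integral is genuinely awkward because the factors $(1-\max(U_{e_i},U_{e_{i'}}))^4$ couple every pair of variables; a cleaner route is recursive.  Among the $j(2n-2j-1)$ edges meeting $V(M)$, the earliest one in the ordering must be one of the $j$ matching edges (otherwise some $e_i$ is beaten at an endpoint), which happens with probability $1/(2n-2j-1)$.  Conditioned on that event, the relative order of the remaining edges meeting $V(M)$---and hence of those meeting $V(M)\setminus\{u_j,v_j\}$---is still uniform, so you are back to the same question for a $(j{-}1)$-matching, giving the product by induction.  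Second, after writing $(2m-1)!!=(2m)!/(2^{m}m!)$ you get
\[
\mathbb{E}\!\left[\binom{X}{j}\right]=\frac{n-1}{\binom{2n-2}{n}}\cdot\frac{(2n-2j-2)!}{j!\,(n-2j)!\,(n-j-1)!},
\]
and the inversion identity you need is then a terminating hypergeometric sum that does collapse by a Vandermonde-type manipulation; it is routine but not a one-liner, so your caution there is warranted.
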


\begin{theorem}[Berikkyzy et al.\ \cite{B}]
We have $\displaystyle P(K_{s,t},k)=\frac{(s+t){s\choose k}{t\choose k}}{st{s+t\choose s}}$.
\end{theorem}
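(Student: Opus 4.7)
The plan is to translate the edge-ordering process into a statement about ``first incident'' edges. I would start by observing that an edge $e=uv$ is retained by the process if and only if $e$ is the earliest edge in the random ordering incident to $u$, or the earliest incident to $v$; writing $\operatorname{first}(w)$ for the earliest edge incident to a vertex $w$, the kept edges are exactly the distinct elements of $\{\operatorname{first}(w):w\in V\}$. In $K_{s,t}$ with parts $A,B$, two distinct vertices can share a $\operatorname{first}$-edge only when one lies in each part and that edge joins them, so the forest has no isolated vertices and the number of trees equals $|V|-|\{\operatorname{first}(w):w\in V\}|$, which in turn equals the number of edges $ab$ satisfying $\operatorname{first}(a)=\operatorname{first}(b)=ab$. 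Call these \emph{matched edges}; they automatically form a (partial) matching in $K_{s,t}$, and $P(K_{s,t},k)$ becomes the probability that they number exactly $k$.

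Next I would compute, for each fixed matching $M=\{a_{i_\ell}b_{j_\ell}\}_{\ell=1}^{j}$ of size $j$, the probability that every edge of $M$ is a matched edge, and then combine these via inclusion--exclusion. The event ``$M\subseteq$ matched edges'' forces each $a_{i_\ell}b_{j_\ell}$ to be the earliest-positioned edge in its \emph{cross}---the $s+t-1$ edges sharing an endpoint with it. Modeling the ordering by i.i.d.\ uniform $[0,1]$ labels and conditioning on the matched-edge labels $x_\ell$, the ``tail'' edges (incident to exactly one vertex of $M$) each force a factor $(1-x_\ell)$, while the ``crossover'' edges $a_{i_\ell}b_{j_{\ell'}}$ with $\ell\ne\ell'$ each contribute $(1-\max(x_\ell,x_{\ell'}))$. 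Integrating yields
\[
\Pr(M\subseteq\text{matched})=\int_{[0,1]^j}\prod_{\ell=1}^{j}(1-x_\ell)^{s+t-2j}\prod_{\ell<\ell'}(1-\max(x_\ell,x_{\ell'}))^{2}\,dx,
\]
which by symmetry reduces to $j!$ times an integral over the simplex $0<x_1<\cdots<x_j<1$; iterated integration should collapse this to a clean product $\prod_{i=1}^{j}1/(s+t-i)$, independent of the choice of $M$.

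Multiplying by the number $\binom{s}{j}\binom{t}{j}j!$ of matchings of size $j$ yields $E[\binom{N}{j}]$, where $N$ is the random number of matched edges (equivalently, the number of trees). Binomial inversion then recovers $\Pr(N=k)=P(K_{s,t},k)$ as an alternating sum over $j\ge k$, and I would simplify it via a Chu--Vandermonde-type identity to obtain the stated closed form. The two main obstacles are (a) carrying out the iterated integral, where tracking how exponents accumulate after each inner integration requires careful bookkeeping but eventually telescopes; and (b) collapsing the inclusion--exclusion sum to the neat ratio in the theorem, which amounts to a nontrivial hypergeometric identity. An alternative bijective route---pairing each ordering giving $k$ trees with a size-$k$ matching together with a linear extension of the induced poset of remaining edges---would likely reduce to the same underlying identity.
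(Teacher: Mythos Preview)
The paper does not contain a proof of this statement: it is quoted from Berikkyzy et~al.\ \cite{B} as background and no argument is given here. Consequently there is nothing in the present paper to compare your proposal against.

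On its own merits, your strategy is sound. The identification of the retained edges with the set $\{\operatorname{first}(w):w\in V\}$ is correct, and in $K_{s,t}$ the resulting count of trees really does equal the number of ``matched'' edges $ab$ with $\operatorname{first}(a)=\operatorname{first}(b)=ab$, since any coincidence $\operatorname{first}(u)=\operatorname{first}(v)$ forces $uv$ to be that edge and hence $u,v$ to lie in opposite parts. Your integral also evaluates as you predict: after the substitution $y_\ell=1-x_\ell$ and ordering $y_1>y_2>\cdots>y_j$, the integrand becomes $\prod_\ell y_\ell^{\,s+t-2j+2(\ell-1)}$, and iterated integration from $y_j$ inward gives denominators $i(s+t-i)$ at the $i$-th step, so the $j!$ cancels the $i$'s and leaves exactly $\prod_{i=1}^{j}1/(s+t-i)$. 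This yields the clean factorial moment
\[
E\!\left[\binom{N}{j}\right]=\binom{s}{j}\binom{t}{j}\Big/\binom{s+t-1}{j},
\]
after which binomial inversion and a Vandermonde-type simplification finish the job. The two obstacles you flag are real but routine; the argument would go through. Since the present paper offers no proof to benchmark against, your write-up would stand on its own as an independent derivation rather than a comparison.
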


For small graphs (at most $5$ vertices), the probabilities are given in Berikkyzy et al.\ \cite{B}.  There are a few instances where two graphs would have the same probabilities for all $k$ listed, and most of those were edge-transitive graphs.  More generally, the following was observed.

\begin{observation}[Berikkyzy et al.\ \cite{B}]
If $G$ is an edge-transitive graph with minimum degree of at least $2$ and $e$ is any edge, then we have $P(G,k)=P(G-e,k)$ for all $k$.
\end{observation}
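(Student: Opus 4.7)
The plan is to condition on the identity of the last edge in the random ordering and exploit the hypothesis $\delta(G) \ge 2$ to show that this last edge is always discarded.

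\textbf{Step 1: The last edge is always rejected.} First I would check that when $\delta(G) \ge 2$, for any ordering $(f_1,\dots,f_m)$ of $E(G)$, the edge $f_m$ is never kept. Write $f_m = uv$. Since $\deg(u) \ge 2$ and $\deg(v) \ge 2$, both $u$ and $v$ lie on some other edge of $G$, and any such edge appears before $f_m$ in the ordering. Hence both endpoints of $f_m$ are already ``seen'' when we process $f_m$, so the rule forces us to skip it.

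\textbf{Step 2: Condition on the last edge.} For a fixed edge $e$, consider a uniformly random ordering of $E(G)$ conditioned on $e$ being the last. The first $m-1$ entries form a uniformly random ordering of $E(G-e)$. By Step~1, the forest produced by the process on $G$ equals the forest produced by running the process on $G-e$ with that ordering. Therefore
\[
P(G,k \mid e \text{ is last}) \;=\; P(G-e,k).
\]

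\textbf{Step 3: Average using edge-transitivity.} By the law of total probability,
\[
P(G,k) \;=\; \sum_{e' \in E(G)} \Pr[e' \text{ is last}]\cdot P(G-e',k) \;=\; \frac{1}{m}\sum_{e' \in E(G)} P(G-e',k).
\]
Edge-transitivity provides, for every $e' \in E(G)$, an automorphism of $G$ sending $e'$ to $e$, inducing an isomorphism $G-e' \cong G-e$. Since the probabilities $P(\cdot,k)$ depend only on the isomorphism type, every summand equals $P(G-e,k)$, and the average collapses to $P(G,k)=P(G-e,k)$.

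The content of the argument is really Step~1; everything else is a straightforward conditioning and symmetry argument. That step is not a true obstacle, but it is the place where the hypothesis $\delta(G) \ge 2$ is essential: if some endpoint of $e$ had degree $1$, the last edge could survive and the conditioning trick would break.
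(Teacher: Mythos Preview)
Your argument is correct and follows exactly the approach the paper sketches: the paper's one-line justification is that ``the last edge in an ordering is never kept, \emph{and} by symmetry every edge is the last edge in an ordering equally often,'' which is precisely your Step~1 together with the conditioning-and-averaging in Steps~2--3. You have simply filled in the details that the paper leaves implicit.
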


In essence, this follows by noting that the last edge in an ordering is never kept, \emph{and} by symmetry every edge is the last edge in an ordering equally often.

The goal of this note is to compute the probabilities for more families of graphs, namely graphs which can produce at most two trees in the forest building process.  Using this, we will produce infinitely many examples of non-isomorphic graphs $G$ and $H$ where the probabilities agree and neither $G$ or $H$ are edge-transitive.

\section{Graphs with at most two trees}
We are interested in exploring the graphs which can produce at most two trees in the forest building process.  Equivalently, this states that there are at most two disjoint edges in the graph (disjoint in the sense that they share no vertex).

\begin{proposition}\label{prop1}
The only non-empty graphs without isolated vertices, which contain no pair of disjoint edges, are star graphs ($K_{1,n}$) and the triangle graph ($K_3$).
\end{proposition}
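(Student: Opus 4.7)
The plan is a direct case analysis based on the existence (or not) of a vertex $v$ that is incident to every edge. Note first that the hypothesis "no pair of disjoint edges" just says that any two edges share a common vertex.

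Case 1: some vertex $v$ is incident to every edge. Since $G$ has no isolated vertices, every other vertex $u$ must carry some edge, and by assumption that edge is incident to $v$, so $uv \in E(G)$. No edge joins two non-$v$ vertices (else it would be disjoint from any edge at $v$ not using those two endpoints---and if $v$ has degree $1$, the "star" is simply $K_{1,1}$). Hence $G = K_{1,n}$.

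Case 2: no vertex is incident to every edge. Pick any edge $e_1 = ab$. Since $a$ is not universal, there is an edge $e_a$ avoiding $a$; since $e_a$ must share a vertex with $e_1$, it must contain $b$, so $e_a = bc$ for some $c$. Similarly some edge avoids $b$, and the same reasoning forces it to go through $a$ and through a vertex of $e_a$; that vertex cannot be $b$, so it is $c$, giving the edge $ac$. Thus the triangle on $\{a,b,c\}$ sits inside $G$.

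It remains to rule out everything else. Any further edge $f$ must meet each of $ab$, $bc$, $ac$ at a common endpoint; a quick check of the possible two-element subsets of $\{a,b,c\}$ shows $f$ must itself be one of these three edges, so no additional edge exists. Finally, any vertex $w \notin \{a,b,c\}$ would (since $G$ has no isolated vertices) carry an edge which we just showed cannot exist. Hence $G = K_3$. The only subtlety---and it is mild---is being careful in Case 2 that the forced coincidence $c = d$ really follows, and that a hypothetical additional edge fails to meet all three sides of the triangle simultaneously; after that the proof is immediate.
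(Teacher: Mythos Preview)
Your proof is correct and proceeds along a genuinely different line than the paper's. The paper argues via longest paths: it first notes the graph must be connected, then observes that a pair of disjoint edges exists if and only if the graph contains a path on four vertices, so the longest path has at most three vertices, and a short case analysis on the endpoints of that path finishes. You instead split on whether some vertex meets every edge; if so you immediately get a star, and if not you build a triangle by chasing the pairwise-intersection condition through three edges and then show no further edge or vertex can be present. Your route is arguably cleaner, since it works directly with the hypothesis (``any two edges share a vertex'') rather than detouring through the slightly informal ``disjoint edges $\Leftrightarrow$ $P_4$'' equivalence. One minor stylistic point: in Case~1 the parenthetical justification is unnecessary, since the case assumption ``$v$ is incident to every edge'' already rules out, by definition, any edge between two non-$v$ vertices.
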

\begin{proof}
If the graph is not connected, then taking one edge from two different components gives two disjoint edges.  So we may assume the graph is connected.

If the graph has two disjoint edges, then we can connect these together by a path, creating a path of length at least four.  Conversely, if the graph has a path of length at least four, then it must contain two disjoint edges.  So we can conclude the longest path is a path with at most three vertices.  If the longest path has two vertices, then the graph is a $K_2$.  

If the longest path has three vertices and the ends of the path are not leafs then it must be that the ends connect and form a triangle.  Since the paw graph has two disjoint edges, this can only happen if the graph is a $K_3$.

Finally, if we are not a triangle and don't have a path of length four (and hence no cycles), then it must be that we are a star.
\end{proof}

\begin{proposition}\label{prop:delete}
If a graph without isolated vertices has a vertex $v$ of degree at least five and contains no set of three disjoint edges, then deleting $v$ and all incident edges, and removing any  isolated vertices results in either an empty graph, a star, or a $K_3$.
\end{proposition}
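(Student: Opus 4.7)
The plan is to reduce this directly to Proposition~\ref{prop1}. Let $G$ be the given graph, let $v$ be a vertex of degree at least five, and let $H$ denote the graph obtained from $G$ by deleting $v$, all edges incident to $v$, and any resulting isolated vertices. Then $H$ has no isolated vertices by construction, so if we can show $H$ contains no pair of disjoint edges, Proposition~\ref{prop1} immediately forces $H$ to be empty, a star, or a triangle.

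To show $H$ has no two disjoint edges, I would argue by contradiction. Suppose $e_1 = \{a,b\}$ and $e_2 = \{c,d\}$ are two disjoint edges of $H$, so that $a, b, c, d$ are four distinct vertices of $G$, none of which is $v$. These four vertices account for at most four of the neighbors of $v$. Since $\deg_G(v) \geq 5$, there is some neighbor $w$ of $v$ in $G$ with $w \notin \{a,b,c,d\}$. Then $vw$ is an edge of $G$ disjoint from both $e_1$ and $e_2$, producing a set of three pairwise disjoint edges in $G$ and contradicting the hypothesis.

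Hence $H$ contains no pair of disjoint edges. Applying Proposition~\ref{prop1} to $H$ yields the conclusion: $H$ is either empty, a star $K_{1,n}$, or a triangle $K_3$.

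There is no serious obstacle here; the only subtle point is bookkeeping with the isolated-vertex removal, which is handled automatically by working with $H$ (so Proposition~\ref{prop1} applies), and checking that the disjoint edges $e_1, e_2$ living in $H$ really do remain disjoint edges of $G$ avoiding $v$, which is immediate from the construction of $H$.
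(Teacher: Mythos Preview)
Your proof is correct and is essentially identical to the paper's argument: both use Proposition~\ref{prop1}, assume two disjoint edges survive the deletion of $v$, and then exploit $\deg(v)\ge 5$ to find a fifth neighbor of $v$ yielding a third disjoint edge. The only cosmetic difference is that the paper phrases the contradiction as ``suppose the resulting graph is not empty, a star, or $K_3$'' and then invokes Proposition~\ref{prop1} to produce the two disjoint edges, whereas you invoke Proposition~\ref{prop1} at the end; the logic is the same.
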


\begin{proof}
Let $v$ be a vertex of degree at least $5$ in the graph.  Suppose that the resulting graph from deleting the vertex $v$ and all incident edges, and removing any isolated vertices is not the empty graph, a star, or a $K_3$.  Proposition~\ref{prop1} states that the only non-empty graphs without isolated vertices, that contain no set of two disjoint edges, is the star graph and $K_3$. Thus the resulting graph will contain at least two disjoint edges. Call these edges $e_1$ and $e_2$; note neither of these edges are incident to $v$.

At most four edges incident to $v$ are also incident to the edges $e_1$ and $e_2$.  Since $v$ has degree at least $5$, this leaves at least one edge, $e_3$, that is connected to $v$ and not incident to $e_1$ or $e_2$.  Thus the original graph contains a set of three disjoint edges.
\end{proof}

Finally, we observe that if all the degrees are bounded and the graph is connected, then as $n$ gets large, the diameter must also grow---which forces three disjoint edges.

Putting this all together, we see that for $n$ large enough (in fact, $n\ge 6$)  the only connected graphs which produce at most two trees, and are not stars, are the following five families.
\begin{itemize}
\item $GS_{a,b,c}$ -- The stars $K_{1,a+b}$ and $K_{1,b+c}$ which have $b$ leaves glued together.  (Glued stars.)
\item $GS_{a,b,c}^{+}$ -- The stars $K_{1,a+b}$ and $K_{1,b+c}$ which have $b$ leaves glued together and the centers joined by an edge.  (Glued stars with an edge.)
\item $\text{Paw}_a$ -- The paw graph with $a$ leaves appended to the vertex of degree $1$.
\item $\text{Di}_a$ -- The diamond graph (a four-cycle with an extra edge) with $a$ leaves appended to one of the vertices of degree $2$.
\item  $(K_4)_a$ -- The complete graph on four vertices with $a$ leaves appended to one of the vertices.
\end{itemize}

Note that the first two of these correspond to Proposition~\ref{prop:delete} where the remaining graph is a star; and the last three of these correspond to Proposition~\ref{prop:delete} where the remaining graph is a $K_3$.  These graphs are shown in Figure~\ref{fig:families}.

\begin{figure}[h!]
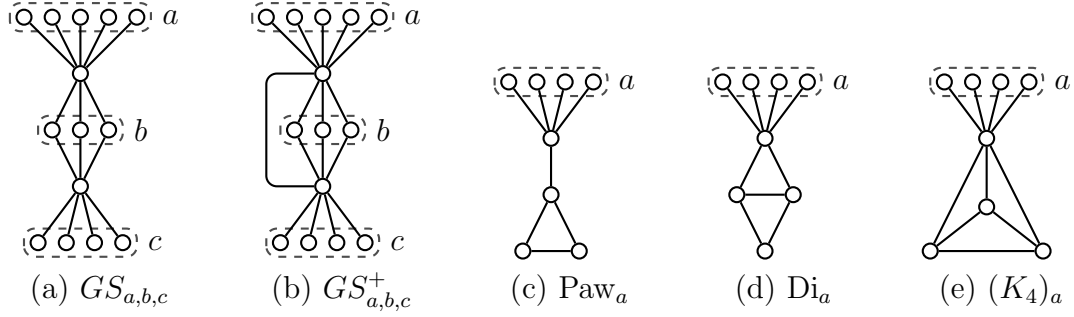

\centering

\picCCC

\caption{The five families of graphs.}
\label{fig:families}
\end{figure}

\section{Computing probabilities for the families}

We now turn our attention to computing the probabilities that a graph ends in one or two trees in the forest building process.  We can find these probabilities by noting that if there are $m$ edges in the graph, then the probability that we end with two trees is
\[
P(G,2)=\frac{\big|\{\text{rearrangements with two trees}\}\big|}{m!}.
\]
We will focus on counting the rearrangements which produce two trees.  Particularly,  we want to count rearrangements where at some point an edge occurs, not at the start, and involves two vertices which have not been previously seen.

Since we will be counting rearrangements, we will find it useful to know how to manipulate binomial coefficients.  Recall that ${n\choose k}=\frac{n!}{k!(n-k)!}$ is the number of ways to choose $k$ elements (in our case this will usually be locations) out of an $n$ element set.  There are many binomial coefficient identities (see Graham, Knuth, and Patashnik \cite[Ch.\ 5]{concrete} for a good introduction); we will need to make repeated use of the following well-known result.

\begin{proposition}\label{prop:binom}
We have 
\begin{equation}\label{eq:binom}
\sum_j{\ell-j\choose m}{q+j\choose n}={\ell+q+1\choose m+n+1}
\end{equation}
where the sum ranges over all values where the summands are nonzero.
\end{proposition}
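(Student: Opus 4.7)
The plan is to prove this by double counting: both sides count $(m+n+1)$-element subsets of $\{0,1,\ldots,\ell+q\}$, a ground set of size $\ell+q+1$. The right-hand side gives this count directly, and the left-hand side will emerge from a partition of these subsets according to a distinguished element.

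Specifically, I would condition on the $(m+1)$-st smallest element of each subset. If this element equals $p\in\{0,1,\ldots,\ell+q\}$, then the remaining $m+n$ chosen elements split into $m$ elements from $\{0,\ldots,p-1\}$ and $n$ elements from $\{p+1,\ldots,\ell+q\}$. Since $\{0,\ldots,p-1\}$ has size $p$ and $\{p+1,\ldots,\ell+q\}$ has size $\ell+q-p$, the number of $(m+n+1)$-subsets whose $(m+1)$-st smallest element is $p$ equals $\binom{p}{m}\binom{\ell+q-p}{n}$. Summing over $p$ yields
\begin{equation*}
\binom{\ell+q+1}{m+n+1}=\sum_{p}\binom{p}{m}\binom{\ell+q-p}{n},
\end{equation*}
and the substitution $p=\ell-j$ rewrites the right-hand side exactly as the left-hand side of the claimed identity.

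The only subtlety is bookkeeping the summation range. If $p$ ranges over the integers making both $\binom{p}{m}$ and $\binom{\ell+q-p}{n}$ nonzero, namely $p\in\{m,\ldots,\ell+q-n\}$, then $j=\ell-p$ correspondingly ranges over $\{n-q,\ldots,\ell-m\}$, which is precisely the range of $j$ making $\binom{\ell-j}{m}$ and $\binom{q+j}{n}$ simultaneously nonzero. Hence the substitution matches the stated convention that the sum is over all $j$ where the summands are nonzero, and there is no boundary issue to worry about.

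If instead an algebraic proof is preferred, one could induct on $m+n$ via Pascal's identity $\binom{N}{K}=\binom{N-1}{K}+\binom{N-1}{K-1}$, with the parallel summation $\sum_{k}\binom{k}{r}=\binom{N+1}{r+1}$ (the hockey-stick identity) as the base case. The double-counting route above is cleaner and is the one I would present; the main obstacle in either approach is really just the index juggling rather than any genuine combinatorial difficulty.
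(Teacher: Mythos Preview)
Your double-counting argument is correct and is a standard proof of this identity. Note, however, that the paper does not actually prove Proposition~\ref{prop:binom}: it simply states the result as well-known and points the reader to Graham, Knuth, and Patashnik \cite[Ch.~5]{concrete}. So there is no ``paper's own proof'' to compare against; your argument would stand in perfectly well as a self-contained justification.
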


\begin{theorem}\label{thm:GS}
We have the following probabilities.
\begin{align*}
P(GS_{a,b,c},1)&=\frac{b}{(b+c+1)(b+c)}+\frac{b}{(a+b+1)(a+b)}\\[5pt]
P(GS_{a,b,c}^{+},1)&=\frac{2b+c+2}{(b+c+1)(b+c+2)}+\frac{2b+a+2}{(b+a+1)(b+a+2)}-\frac{1}{a+2b+c+1}
\end{align*}
\end{theorem}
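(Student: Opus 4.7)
The plan is as follows. First, observe that every vertex always ends up in the forest: the first edge of the ordering incident to a vertex $w$ is automatically added, since $w$ is fresh (unseen) at that moment. Consequently, in $GS_{a,b,c}$ the leaves $x_i$ are always joined to $u$ and the leaves $y_k$ are always joined to $v$, so the forest is a single tree if and only if $u$ and $v$ lie in the same component, which happens precisely when some middle vertex $m_j$ has both incident edges $u m_j$ and $m_j v$ in the forest.

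For $GS_{a,b,c}$, I analyze when both edges at $m_j$ are added by splitting on which one appears first in the ordering: the earlier is always added, while the later is added if and only if its outer endpoint ($v$ or $u$) is still fresh when it is processed. This yields two events per $j$: $(A)_j$ says $u m_j$ precedes $m_j v$ and $m_j v$ is the first edge incident to $v$ in the whole ordering; $(B)_j$ is the symmetric statement with the roles of $u$ and $v$ swapped. The next step is to show that the $2b$ events $\{(A)_j,(B)_j\}_{j=1}^{b}$ are pairwise disjoint: two $(A)$'s (or two $(B)$'s) would force two distinct edges to both be the first $v$-edge (respectively $u$-edge), and $(A)_{j_1}$ together with $(B)_{j_2}$ would produce the cyclic chain $u m_{j_2}<u m_{j_1}<m_{j_1}v<m_{j_2}v<u m_{j_2}$. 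Hence $P(GS_{a,b,c},1)=\sum_{j=1}^{b}\bigl(\Pr[(A)_j]+\Pr[(B)_j]\bigr)$. For each summand I restrict to a small set of edges whose induced relative order is uniform: for $(A)_j$, restrict to the $b+c+1$ edges consisting of $u m_j$ together with all edges incident to $v$; the event becomes ``$u m_j$ is first and $m_j v$ is second in this restriction'', which has probability $\tfrac{1}{(b+c+1)(b+c)}$. Symmetrically $\Pr[(B)_j]=\tfrac{1}{(a+b+1)(a+b)}$, and summing over $j$ gives the claimed formula.

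For $GS_{a,b,c}^{+}$ the forest is a single tree if and only if $u$ and $v$ share a component, which can now happen for either of two reasons: the new edge $uv$ is in the forest (event $A$), or some $m_j$ has both its edges in the forest (event $B$). I will use inclusion--exclusion on $A\cup B$. The event $A$ is the event that $uv$ is the first $u$-edge or the first $v$-edge; a secondary inclusion--exclusion, using that $uv$ is simultaneously first among $u$-edges and first among $v$-edges if and only if it is the overall first edge (since every edge is incident to $u$ or to $v$), gives $P(A)=\tfrac{1}{a+b+1}+\tfrac{1}{b+c+1}-\tfrac{1}{a+2b+c+1}$. The computation of $P(B)$ repeats the $GS_{a,b,c}$ argument verbatim, except that the $u$- and $v$-edges now also include $uv$, shifting each denominator by one and giving $P(B)=\tfrac{b}{(b+c+1)(b+c+2)}+\tfrac{b}{(a+b+1)(a+b+2)}$.

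The key remaining step, and the one I expect to be the main subtlety, is showing $P(A\cap B)=0$. Two of the four cross-combinations (for example $uv$ being the first $u$-edge together with $(B)_j$) are immediately impossible, because they would require two distinct edges to both be the first $u$-edge (or the first $v$-edge); the other two, for instance $uv$ being the first $u$-edge combined with $(A)_j$, produce a short contradictory chain: $uv<u m_j$ from $uv$ being the first $u$-edge, $u m_j<m_j v$ from $(A)_j$, and $m_j v<uv$ from $m_j v$ being the first $v$-edge. Given $P(A\cap B)=0$, the theorem follows by computing $P(A)+P(B)$ and applying the elementary identity $\tfrac{1}{n+1}+\tfrac{b}{(n+1)(n+2)}=\tfrac{n+b+2}{(n+1)(n+2)}$ with $n=a+b$ and $n=b+c$ to combine the relevant pieces into the stated form.
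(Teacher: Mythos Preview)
Your proof is correct, and it takes a genuinely different route from the paper's.

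The paper computes $P(GS_{a,b,c},2)$ by conditioning on how many edges $i$ (from the $b$ glued leaves) and $j$ (from the $a$ outer leaves) of the star $K_{1,a+b}$ are chosen before the first edge of $K_{1,b+c}$ appears, writing this as a double sum
\[
\sum_{i,j}\frac{\binom{a}{j}\binom{b}{i}(i+j)!\,(b+c-i)(a+2b+c-i-j-1)!}{(a+2b+c)!}
\]
(plus a symmetric sum and correction terms), and then collapsing the sums via two applications of the Vandermonde-type identity $\sum_j\binom{\ell-j}{m}\binom{q+j}{n}=\binom{\ell+q+1}{m+n+1}$. The treatment of $GS^{+}_{a,b,c}$ is the same computation with the extra edge absorbed into the factorials.

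You instead compute $P(GS_{a,b,c},1)$ directly. Your key structural observation is that the forest is a tree exactly when some middle vertex $m_j$ keeps both of its edges, and that this decomposes into $2b$ pairwise disjoint events $(A)_j,(B)_j$ whose probabilities are read off from the induced order on just $b+c+1$ (resp.\ $a+b+1$) edges. This sidesteps all the binomial manipulation: each probability is an elementary ``first and second in a uniformly random list'' calculation. For $GS^{+}_{a,b,c}$ you add the single event $A=\{uv\text{ kept}\}$, handle it by a short inclusion--exclusion, and verify $A\cap B=\varnothing$ via the cyclic-chain contradictions you wrote down; the final algebraic combination is a one-line identity.

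What the paper's approach buys is generality of technique: the ``count orderings with a double sum, then apply binomial identities'' template is what they reuse (in a different guise) for the $\text{Paw}_a$, $\text{Di}_a$, $(K_4)_a$ families. What your approach buys is a far shorter, more conceptual proof for the glued-star families specifically, with no summation identities needed; the disjointness of the events $(A)_j,(B)_j$ (and of $A$ with $B$) is exactly the combinatorial heart of why the answer is so clean.
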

\begin{proof}
Since $P(GS_{a,b,c},1)+P(GS_{a,b,c},2)=1$, we can focus on computing the probability of resulting in two trees.  We now claim
\begin{multline}
P(GS_{a,b,c},2)=\sum_{i=0}^{b} \sum_{j=0}^{a} \frac {{a \choose j}{b \choose i}{(i+j)!}{(b+c-i)}{(a+2b+c-i-j-1)!}}{(a+2b+c)!} -\frac{b+c}{a+2b+c}\\
+\sum_{i=0}^{b} \sum_{j=0}^{c} \frac{{c \choose j}{b \choose i}{(i+j)!}{(a+b-i)}{(a+2b+c-i-j-1)!}}{(a+2b+c)!} -\frac{a+b}{a+2b+c}.\label{eq:GS}
\end{multline}

This comes from the two cases, namely where our first edge initially comes from the ``top half'' (i.e., edges coming from the star $K_{1,a+b}$), and where our first edge initially comes from the ``bottom half'' (i.e., edges coming from the star $K_{1,b+c}$).  We focus on the top half case, as the bottom half follows by an identical argument by interchanging the roles of $a$ and $c$.

Determining if we have two trees comes down to what happens when we pick our first edge from the star $K_{1,b+c}$.  We look at all ways that this occurs by first picking edges from $K_{1,a+b}$ and then considering what happens when we pick our edge from $K_{1,b+c}$.  In particular, we will pick $j$ edges from the $a$ leaf vertices and $i$ edges from the $b$ gluing vertices.  We now run over all possibilities for $i$ and $j$.

For each choice of edges we now consider all possible ordering as follows:
\begin{itemize}
\item ${a\choose j}$ corresponds to which of the $j$ edges among the $a$ were chosen.
\item ${b\choose i}$ corresponds to which of the $i$ edges among the $b$ were chosen.
\item $(i+j)!$ indicates how many ways to order these $i+j$ edges (note that these $i+j$ edges are all of the initial edges).
\item $(b+c-i)$ indicates how many edges disjoint from the ones above are available to choose, if we want to create two trees.
\item $(a+2b+c-i-j-1)!$ is the number of ways to rearrange the remaining edges.
\end{itemize}
This gives all orderings of edges possible, to get the probability we now divide by the total number of orderings which is $(a+2b+c)!$.

Note that in the summation we need to correct for $i=0$, $j=0$ which does not fall into the case where the first edge is from the top.  So we subtract this term off at the end, which gives the $-\frac{b+c}{a+2b+c}$ term at the end.

To now simplify these sums we can repeatedly apply \eqref{eq:binom}.  So we have the following.
\begin{align*}
&\sum_{i=0}^{b} \sum_{j=0}^{a} \frac {{a \choose j}{b \choose i}{(i+j)!}{(b+c-i)}{(a+2b+c-i-j-1)!}}{(a+2b+c)!}\\
=&\sum_{i=0}^{b} \sum_{j=0}^{a} \frac {{a! \over j!(a-j)!}{b! \over i!(b-i)!}{(i+j)!}{(b+c-i)}{(a+2b+c-i-j-1)!}}{(a+2b+c)!}\\
=&\sum_{i=0}^{b}\frac{a!b!(b+c-i)}{(a+2b+c)!(b-i)!} \sum_{j=0}^{a} \frac{(i+j)!}{i!\,j!}\frac{(a+2b+c-i-j-1)!}{(a-j)!}\\
=&\sum_{i=0}^{b}\frac{a!b!(b+c-i)(2b+c-i-1)!}{(a+2b+c)!(b-i)!} \sum_{j=0}^{a} \frac{(i+j)!}{i!\,j!}\frac{(a+2b+c-i-j-1)!}{(a-j)!(2b+c-i-1)!}\\
=&\sum_{i=0}^{b}\frac{a!b!(b+c-i)(2b+c-i-1)!}{(a+2b+c)!(b-i)!} \sum_{j=0}^{a} {i+j\choose i}{a+2b+c-i-1-j\choose 2b+c-i-1}\\
=&\sum_{i=0}^{b}\frac{a!b!(b+c-i)(2b+c-i-1)!}{(a+2b+c)!(b-i)!} {a+2b+c\choose 2b+c}\\
=&\sum_{i=0}^{b}\frac{a!b!(b+c-i)(2b+c-i-1)!}{(a+2b+c)!(b-i)!} \frac{(a+2b+c)!}{(2b+c)!a!}\\
=&\sum_{i=0}^{b}\frac{b!(b+c-i)(2b+c-i-1)!}{(b-i)!(2b+c)!}\\
=&\frac{b!(b+c-1)!}{(2b+c)!}\sum_{i=0}^{b}\frac{(2b+c-i-1)!}{(b-i)!(b+c-1)!}(b+c-i)\\
=&\frac{b!(b+c-1)!}{(2b+c)!}\bigg((b+c)\sum_{i=0}^{b}{2b+c-1-i\choose b+c-1}{i\choose 0}-\sum_{i=0}^{b}{2b+c-1-i\choose b+c-1}{i\choose 1}\bigg)\\
=&\frac{b!(b+c-1)!}{(2b+c)!}\bigg((b+c){2b+c\choose b+c}-{2b+c\choose b+c+1}\bigg)\\
=&\frac{b!(b+c-1)!}{(2b+c)!}\bigg((b+c)\frac{(2b+c)!}{(b+c)!b!}-\frac{(2b+c)!}{(b+c+1)!(b-1)!}\bigg)\\
=&1-\frac{b}{(b+c+1)(b+c)}
\end{align*}
By a similar process, the other double sum becomes
\[
\sum_{i=0}^{b} \sum_{j=0}^{c} \frac{{c \choose j}{b \choose i}{(i+j)!}{(a+b-i)}{(a+2b+c-i-j-1)!}}{(a+2b+c)!} = 1-\frac{b}{(b+a+1)(b+a)}.
\]
Now replacing the double sums by these simplified expressions we have
\begin{multline*}
P(GS_{a,b,c},2){=}\bigg(1-\frac{b}{(b+c+1)(b+c)}\bigg)-\frac{b+c}{a+2b+c} +\bigg(1-\frac{b}{(b+a+1)(b+a)}\bigg)-\frac{a+b}{a+2b+c}\\
=1-\frac{b}{(b+c+1)(b+c)}-\frac{b}{(b+a+1)(b+a)}.
\end{multline*}
Finally we note
\[
P(GS_{a,b,c},1)=1-P(GS_{a,b,c},2)=\frac{b}{(b+c+1)(b+c)}+\frac{b}{(b+a+1)(b+a)},
\]
establishing the result for $GS_{a,b,c}$.

The result for $P(GS_{a,b,c}^{+},2)$ follows by a similar argument, the only difference being the additional edge which \emph{cannot} be used in order to result in two trees.  So \eqref{eq:GS} would now become
\begin{multline*}
P(GS_{a,b,c}^{+},2)=\sum_{i=0}^{b} \sum_{j=0}^{a} \frac {{a \choose j}{b \choose i}{(i+j)!}{(b+c-i)}{(a+2b+c-i-j)!}}{(a+2b+c+1)!}-\frac{b+c}{a+2b+c+1} \\
+\sum_{i=0}^{b} \sum_{j=0}^{c} \frac{{c \choose j}{b \choose i}{(i+j)!}{(a+b-i)}{(a+2b+c-i-j)!}}{(a+2b+c+1)!} -\frac{a+b}{a+2b+c+1}.
\end{multline*}
The rest of the argument works in the same way as before.
\end{proof}

\begin{theorem}
We have the following probabilities.
\begin{align*}
P(\text{\emph{Paw}}_a,1)&=\frac16-\frac1{a+3}+\frac1{a+1}\\
P(\text{\emph{Di}}_a,1)&=\frac3{10}-\frac2{a+4}+\frac2{a+2}\\
P((K_4)_a,1)&=\frac25-\frac3{a+5}+\frac3{a+3}
\end{align*}
\end{theorem}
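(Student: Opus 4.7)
Following the strategy of Theorem~\ref{thm:GS}, I compute $P(G,2)$ directly and obtain $P(G,1)=1-P(G,2)$. Each ordering producing exactly two trees has a unique pair of \emph{seed edges} $(e_1,e_2)$: $e_1$ sits at position $1$ (automatically a first-touch), and $e_2$ is the first later edge both of whose endpoints are unseen when processed (and therefore creates the second tree). Since each of $\text{Paw}_a$, $\text{Di}_a$, $(K_4)_a$ contains at most two pairwise-disjoint edges, the pair $(e_1,e_2)$ is vertex-disjoint and no third seed can appear. Conversely, every ordered vertex-disjoint edge pair arises as a seed pair in some ordering, so I will count the two-tree orderings by summing over ordered disjoint pairs $(e_1,e_2)$.

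For a fixed ordered disjoint pair $(e_1,e_2)$, the orderings realizing them as seeds have $e_1$ at position $1$, $e_2$ at some position $m+2$, the $m$ intervening slots filled in any order by an $m$-subset of the set $A(e_1,e_2)$ of edges that are distinct from $e_1$ and not incident to either endpoint of $e_2$, and the remaining $N-m-2$ edges (with $N=|E(G)|$) appearing in any order after $e_2$. Any edge incident to an endpoint of $e_2$ placed before $e_2$ would expose that endpoint prematurely and spoil the second-seed property; conversely a short case check shows every edge in $A(e_1,e_2)$ is incident to an endpoint of $e_1$, hence to a vertex already seen, and so produces no spurious first-touch. Writing $n=|A(e_1,e_2)|$, the count becomes
\[
\sum_{m=0}^{n}\binom{n}{m}\,m!\,(N-m-2)! \;=\; \frac{(N-1)!}{N-n-1},
\]
by $\binom{n}{m}m!=n!/(n-m)!$ and a single application of the hockey-stick identity~\eqref{eq:binom}.

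What remains is to enumerate the ordered disjoint pairs and read off $n=|A(e_1,e_2)|$ in each graph. For $\text{Paw}_a$ the $6a+2$ ordered pairs split into classes giving $n\in\{a,a-1,2,0\}$; for $\text{Di}_a$ the $6a+4$ pairs similarly give $n\in\{a,a-1,2,0\}$; and for $(K_4)_a$ the full $S_3$-symmetry on $\{v,w,x\}$ collapses the $6a+6$ pairs into just three classes with $n\in\{a,2,0\}$. Summing $\frac{(N-1)!}{N-n-1}$ over all ordered disjoint pairs (with multiplicities) and dividing by $N!$ yields $P(G,2)$ as a short linear combination of terms $\frac{c_k}{a+k}$ plus a constant. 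A routine partial-fraction decomposition (of the $\frac{ca}{(a+k_1)(a+k_2)}$- and $\frac{1}{(a+k_3)(a+k_4)}$-type pieces that appear) then collapses everything into the claimed three-term form, with the leading constants $\tfrac{5}{6}$, $\tfrac{7}{10}$, $\tfrac{3}{5}$ for $P(G,2)$ emerging as the $a\to\infty$ limits.

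The main obstacle is the case analysis of $A(e_1,e_2)$, which requires attention in both $\text{Paw}_a$ and $\text{Di}_a$: in each graph the three ``triangle'' edges on $\{v,w,x\}$ split into two orbits under the graph's automorphism group, and these two orbits give different values of $n$ for pairs of the form $(u\ell_i,\cdot)$ or $(\cdot,u\ell_i)$, so they must be treated separately. A secondary point is the verification justifying the simple binomial form of the inner count, which amounts to the observation that the four endpoints of $e_1$ and $e_2$ always form a vertex cover of the remaining edges in these graphs---so after excluding the edges incident to $e_2$, what is left is exactly the set of edges incident to an endpoint of $e_1$.
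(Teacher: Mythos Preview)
Your argument is correct, and it takes a genuinely different route from the paper's. The paper collapses the $a$ pendant edges into a single ``type'' and then tabulates, for each graph, every prefix sequence of edge-types that terminates in a second seed; the probabilities are computed as products of conditional step-probabilities and summed (Tables~\ref{tab:paw}--\ref{tab:K4}). You instead decompose by the ordered seed pair $(e_1,e_2)$ and observe that the number of compatible orderings depends on the single integer $n=|A(e_1,e_2)|$, collapsing the inner enumeration to the closed form $\sum_{m}\binom{n}{m}m!(N-m-2)!=(N-1)!/(N-n-1)$. The crucial step---that any subset of $A(e_1,e_2)$ in any order produces no spurious seed---you justify structurally: the hypothesis ``no three pairwise-disjoint edges'' is exactly the statement that the four endpoints of $e_1,e_2$ form a vertex cover, so every edge of $A(e_1,e_2)$ meets $e_1$. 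This replaces the paper's line-by-line prefix tables with a short orbit computation of $n$ under the automorphism group.

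What each approach buys: the paper's tables are easy to audit by inspection and make the ``collapse the leaves'' heuristic explicit. Your seed-pair decomposition is more uniform---the same one-line formula handles all three families once the values of $n$ are listed---and it extends verbatim to any graph with matching number at most~$2$, not just these particular shapes. The presentation would be strengthened by actually recording the multiplicities (e.g.\ for $\mathrm{Paw}_a$: $n=a$ with multiplicity $a{+}1$, $n=a{-}1$ with multiplicity $2a$, $n=2$ with multiplicity $3a$, $n=0$ with multiplicity $1$), since ``a routine partial-fraction decomposition'' hides the one place a sign or count error could slip in.
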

\begin{proof}
We will again compute the probability that there are two trees in the process.  However, in these cases there are many more possibilities to consider.  To simplify the situation, we make the following observation:  Every edge which is a leaf in the original graph will always be kept in the forest building process.  This indicates if there are multiple leaves off of a single vertex $v$, then we only need to know when the \emph{first} leaf was chosen.  This is because, by the first leaf, $v$ will have been seen by some edge.

So we now represent the remaining graphs from Figure~\ref{fig:families}, as shown in Figure~\ref{fig:condensed}, where $a$ corresponds to all of the $a$ leaves condensed down; and the remaining edges are labeled as indicated with each label other than $a$ corresponding to a single edge.

\begin{figure}[h!]
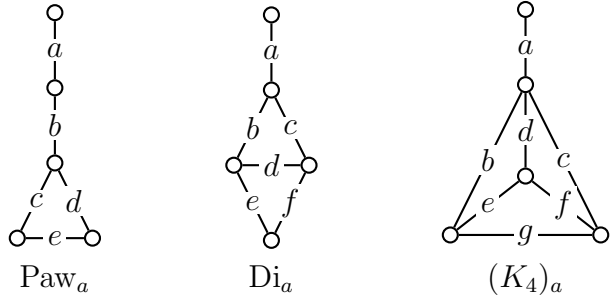

\centering

\picCONDENSED

\caption{The remaining three graphs with the leaves collapsed to a single edge $a$.}
\label{fig:condensed}
\end{figure}

For each graph, we now look at all possible ways to start selecting edges and end with a pair of disjoint edges.  We also find the probability of starting our selection in a particular way.  Recall that an edge marked $a$ corresponds to $a$ different edges, and so until we select that edge, we assume all $a$ of them haven't been seen and are available for picking; after selection by the observation, we can assume they have all been seen. (In other words, it is only the relative ordering of the different types of edges that matter.)

For the paw graph, we have the possibilities shown in Table~\ref{tab:paw} (the first column indicates every possible sequence of choices of edges until two trees are formed, while the second column indicates the probability of any one of those sequence of choices being made).  If we now sum all of these probabilities together, we get 
\[
P(\text{Paw}_a,2)=\frac56+\frac1{a+3}-\frac1{a+1},
\]
establishing the result (recall that $P(\text{Paw}_a,1)+P(\text{Paw}_a,2)=1$).

\begin{table}
\centering
\begin{tabular}{|l|l|}\hline
Start of edge orderings resulting in $2$ trees& Probabilities of an ordering \\ \hline\hline
$ac$, $ad$, $ae$ & $\displaystyle\vphantom{\bigg|}\frac{a}{a+4}\cdot\frac14$\\\hline
$be$, $eb$ & $\displaystyle\vphantom{\bigg|}\frac{1}{a+4}\cdot\frac{1}{a+3}$\\\hline
$ca$, $da$, $ea$ & $\displaystyle\vphantom{\bigg|}\frac{1}{a+4}\cdot\frac{a}{a+3}$\\\hline
$abe$& $\displaystyle\vphantom{\bigg|}\frac{a}{a+4}\cdot\frac14\cdot\frac13$ \\\hline
$bae$ & $\displaystyle\vphantom{\bigg|}\frac{1}{a+4}\cdot\frac{a}{a+3}\cdot\frac13$\\\hline
$cda$, $cea$, $dca$, $dea$, $eda$, $eca$ & $\displaystyle\vphantom{\bigg|}\frac{1}{a+4}\cdot\frac{1}{a+3}\cdot\frac{a}{a+2}$\\\hline
$ceda$, $cdea$, $dcea$, $deca$, $ecda$, $edca$ & $\displaystyle\vphantom{\bigg|}\frac{1}{a+4}\cdot\frac{1}{a+3}\cdot\frac{1}{a+2}\cdot\frac{a}{a+1}$\\\hline
\end{tabular}
\caption{Probabilities associated with $\text{Paw}_a$.}
\label{tab:paw}
\end{table}

The results for the remaining two graphs are established in the same way and the corresponding probabilities are given in Tables~\ref{tab:di} and \ref{tab:K4}.
\begin{table}
\centering
\begin{tabular}{|l|l|}\hline
Start of edge orderings resulting in $2$ trees& Probabilities of an ordering \\ \hline\hline
$ad$, $ae$, $af$ & $\displaystyle\vphantom{\bigg|}\frac{a}{a+5}\cdot\frac{1}{5}$\\\hline
$bf$, $ce$, $ec$, $fb$ & $\displaystyle\vphantom{\bigg|}\frac{1}{a+5}\cdot\frac{1}{a+4}$\\\hline
$da$, $ea$, $fa$ & $\displaystyle\vphantom{\bigg|}\frac{1}{a+5}\cdot\frac{a}{a+4}$\\\hline
$abf$, $ace$ & $\displaystyle\vphantom{\bigg|}\frac{a}{a+5}\cdot\frac{1}{5}\cdot\frac{1}{4}$\\\hline
$baf$, $cae$ & $\displaystyle\vphantom{\bigg|}\frac{1}{a+5}\cdot\frac{a}{a+4}\cdot\frac{1}{4}$\\\hline
$dea$, $dfa$, $eda$, $efa$, $fda$, $fea$ & $\displaystyle\vphantom{\bigg|}\frac{1}{a+5}\cdot\frac{1}{a+4}\cdot\frac{a}{a+3}$\\\hline
$defa$, $dfea$, $edfa$, $efda$, $fdea$, $feda$ & $\displaystyle\vphantom{\bigg|}\frac{1}{a+5}\cdot\frac{1}{a+4}\cdot\frac{1}{a+3}\cdot\frac{a}{a+2}$\\ \hline
\end{tabular}
\caption{Probabilities associated with $\text{Di}_a$.}
\label{tab:di}
\end{table}
\begin{table}
\centering
\begin{tabular}{|l|l|}\hline
Start of edge orderings resulting in $2$ trees& Probabilities of an ordering \\ \hline\hline
$ae$, $af$, $ag$ & $\displaystyle\vphantom{\bigg|}\frac{a}{a+6}\cdot\frac{1}{6}$\\ \hline
$bf$, $ce$, $dg$, $ec$, $fb$, $gd$ & $\displaystyle\vphantom{\bigg|}\frac{1}{a+6}\cdot\frac{1}{a+5}$\\ \hline
$ea$, $fa$, $ga$ & $\displaystyle\vphantom{\bigg|}\frac{1}{a+6}\cdot\frac{a}{a+5}$\\ \hline
$abf$, $adg$, $ace$ & $\displaystyle\vphantom{\bigg|}\frac{a}{a+6}\cdot\frac{1}{6}\cdot\frac{1}{5}$\\ \hline
$baf$, $dag$, $cae$ & $\displaystyle\vphantom{\bigg|}\frac{1}{a+6}\cdot\frac{a}{a+5}\cdot\frac{1}{5}$\\ \hline
$efa$, $ega$, $fea$, $fga$, $gea$, $gfa$ & $\displaystyle\vphantom{\bigg|}\frac{1}{a+6}\cdot\frac{1}{a+5}\cdot\frac{a}{a+4}$\\ \hline
$efga$, $egfa$, $fega$, $fgea$, $gefa$, $gfea$ & $\displaystyle\vphantom{\bigg|}\frac{1}{a+6}\cdot\frac{1}{a+5}\cdot\frac{1}{a+4}\cdot\frac{a}{a+3}$\\ \hline
\end{tabular}
\caption{Probabilities associated with $(K_4)_a$.}
\label{tab:K4}
\end{table}
\end{proof}

\section{Examples of graphs with the same probabilities}

Using the formulas from the theorems in the preceding section, we can now compute the probabilities for a large number of these graphs efficiently.  In particular,  we examined all graphs up through five hundred vertices in these families, and discovered several examples of families of non-isomorphic graphs which produce the same probabilities.

\begin{proposition}\label{prop:famA}
Given $s,t\ge 1$ with $s$ dividing into $2t(t+1)$, let $r=\frac{2t(t+1)}{s}$.  Then we have for all $k$
\[
P(GS_{r+3t+1,s,t},k)=
P(GS_{t,r+s+2t+1,t},k)=
P(GS_{3t+s+1,r,t},k).
\]
\end{proposition}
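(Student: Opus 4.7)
The plan is to reduce the three-way equality to matching $P(G,1)$ values. Each graph $GS_{a,b,c}$ produces at most two trees (by the classification in Section~2), so $P(G,k)=0$ for $k\geq 3$ and $P(G,1)+P(G,2)=1$; hence agreement in $P(G,1)$ automatically gives agreement in $P(G,2)$.

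Next, I would apply Theorem~\ref{thm:GS} to write each $P(G,1)$ in closed form. Setting $N=(r+s+3t+1)(r+s+3t+2)$, the three expressions become
\begin{align*}
P(GS_{r+3t+1,s,t},1) &= \frac{s}{(s+t+1)(s+t)} + \frac{s}{N},\\
P(GS_{t,r+s+2t+1,t},1) &= \frac{2(r+s+2t+1)}{N},\\
P(GS_{3t+s+1,r,t},1) &= \frac{r}{(r+t+1)(r+t)} + \frac{r}{N},
\end{align*}
where the middle formula collapses to a single fraction because its two summands coincide when $a=c$. The constraint $rs=2t(t+1)$ then yields two factorizations by direct expansion: $(s+t)(r+t)=t(r+s+3t+2)$ and $(s+t+1)(r+t+1)=(t+1)(r+s+3t+1)$, whose product gives $(s+t+1)(s+t)(r+t+1)(r+t)=t(t+1)\,N$.

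Using the product identity, $\frac{s}{(s+t+1)(s+t)} = \frac{s(r+t+1)(r+t)}{t(t+1)N}$, and expanding $s(r+t+1)(r+t)$ while substituting $rs=2t(t+1)$ simplifies the numerator to $t(t+1)(2r+s+4t+2)$. Thus $\frac{s}{(s+t+1)(s+t)} = \frac{2r+s+4t+2}{N}$, and adding $\frac{s}{N}$ gives $\frac{2(r+s+2t+1)}{N}$, matching the middle graph. The equality for the third graph is immediate by the $r\leftrightarrow s$ symmetry of the derivation. The main effort is the algebraic bookkeeping in the simplification $s(r+t+1)(r+t)=t(t+1)(2r+s+4t+2)$; the divisibility hypothesis that $s\mid 2t(t+1)$ is used only to ensure $r$ is a positive integer so that the graphs are well-defined.
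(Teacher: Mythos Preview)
Your proposal is correct and follows the same approach the paper indicates: the paper's proof of this proposition is simply the one-line remark that it ``immediately follows by applying the formulas for the probabilities from Theorem~\ref{thm:GS},'' and you have carried out precisely that verification, supplying the algebraic details (including the useful factorizations $(s+t)(r+t)=t(r+s+3t+2)$ and $(s+t+1)(r+t+1)=(t+1)(r+s+3t+1)$ under the constraint $rs=2t(t+1)$) that the paper omits.
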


This immediately follows by applying the formulas for the probabilities from Theorem~\ref{thm:GS}.

\begin{figure}[h!]
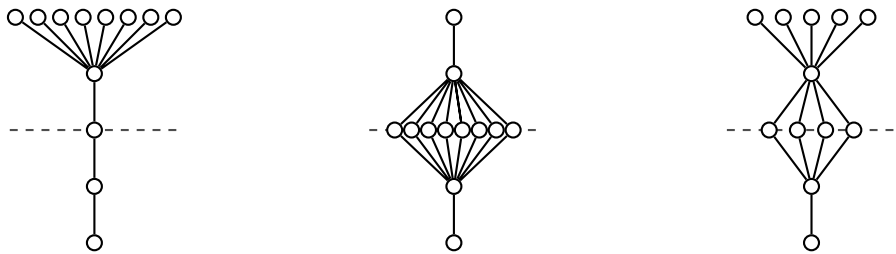

\centering

\MisaA

\caption{Examples of a set of three graphs with the same probabilities from Proposition~\ref{prop:famA}.  In this case $s=t=1$.}
\label{fig:sameP}
\end{figure}

\begin{proposition}\label{prop:famB}
Given $t\ge 1$, then we have for all $k$
\[
P(GS_{5t+3,t,2t},k)=
P(GS_{5t+1,t+1,2t+1},k).
\]
\end{proposition}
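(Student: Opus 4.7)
The plan is to apply Theorem~\ref{thm:GS} directly. Since $GS_{a,b,c}$ can only produce at most two trees in the forest building process, we have $P(GS_{a,b,c},1) + P(GS_{a,b,c},2) = 1$, so it suffices to verify the equality for $k=1$; the case $k=2$ then follows immediately.

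First I would substitute $(a,b,c)=(5t+3,\,t,\,2t)$ into the formula for $P(GS_{a,b,c},1)$. Here $b+c=3t$ and $a+b=6t+3$, which yields
\[
P(GS_{5t+3,t,2t},1) = \frac{t}{(3t+1)(3t)} + \frac{t}{(6t+4)(6t+3)} = \frac{1}{3(3t+1)} + \frac{t}{6(3t+2)(2t+1)}.
\]
Next I would substitute $(a,b,c)=(5t+1,\,t+1,\,2t+1)$, for which $b+c=3t+2$ and $a+b=6t+2$, giving
\[
P(GS_{5t+1,t+1,2t+1},1) = \frac{t+1}{(3t+3)(3t+2)} + \frac{t+1}{(6t+3)(6t+2)} = \frac{1}{3(3t+2)} + \frac{t+1}{6(3t+1)(2t+1)}.
\]
Finally, I would place both quantities over the common denominator $6(3t+1)(3t+2)(2t+1)$ and verify that both numerators simplify to the same polynomial, namely $15t^2+15t+4$: for the first expression this means expanding $2(3t+2)(2t+1) + t(3t+1)$, and for the second $2(3t+1)(2t+1) + (t+1)(3t+2)$. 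Equality of these numerators establishes the identity for $k=1$, and hence for all $k$.

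The proof is essentially a short algebraic verification and presents no real obstacle; the only bookkeeping care needed is in the two quadratic expansions above. One mildly interesting feature worth remarking on, and which distinguishes this family from many symmetry-based coincidences, is that the two graphs have \emph{different} numbers of edges ($9t+3$ versus $9t+4$), so the matching probabilities do not come from any simple rescaling or edge-deletion argument in the spirit of the edge-transitive observation from the introduction.
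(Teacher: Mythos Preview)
Your proof is correct and follows exactly the approach the paper indicates: the paper simply states that the result ``immediately follows by applying the formulas for probabilities from Theorem~\ref{thm:GS},'' and you have carried out precisely that verification, with the algebra (both numerators equal to $15t^2+15t+4$) checking out. Your closing remark that the two graphs have different edge counts ($9t+3$ versus $9t+4$) is a nice observation that goes slightly beyond what the paper says.
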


This also immediately follows by applying the formulas for probabilities from Theorem~\ref{thm:GS}.  We note that there were many other examples of pairs of glued star graphs which are not explained by Propositions~\ref{prop:famA} and \ref{prop:famB}.  A complete characterization of all such pairs of glued stars remains elusive.

Looking beyond glued stars, we found very few pairs of graphs with the same probabilities and the results do not seem to fit any patterns.  As an example, all pairs of graphs from the  $GS_{a,b,c}^{+}$ family up through $500$ vertices with the same probabilities are listed below (it is possible that this is a complete list for this family).
\begin{gather*}
P(GS_{17, 3, 9}^{+},k)= P(GS_{10, 9, 10}^{+},k)\\
P(GS_{28, 5, 9}^{+},k)= P(GS_{26, 8, 8}^{+},k)\\
P(GS_{103, 15, 48}^{+},k)= P(GS_{63, 71, 32}^{+},k)\\
P(GS_{95, 23, 53}^{+},k)= P(GS_{53, 66, 52}^{+},k)
\end{gather*}

\section{Conclusion}

We found the probabilities for all connected graphs which can form at most two trees in this forest building process.  A natural next step is to consider graphs with at most three trees.  As an example, two pairs of graphs with at most three trees and matching probabilities are given in Figure~\ref{fig:three}.  This is suggestive that these are the start of an infinite family of such graphs, but we have not yet established this.  One difficulty is that unlike the situation for two trees where only one probability needed to be computed (since the probabilities sum to one), this requires that two probabilities be computed.

\begin{figure}[h!]
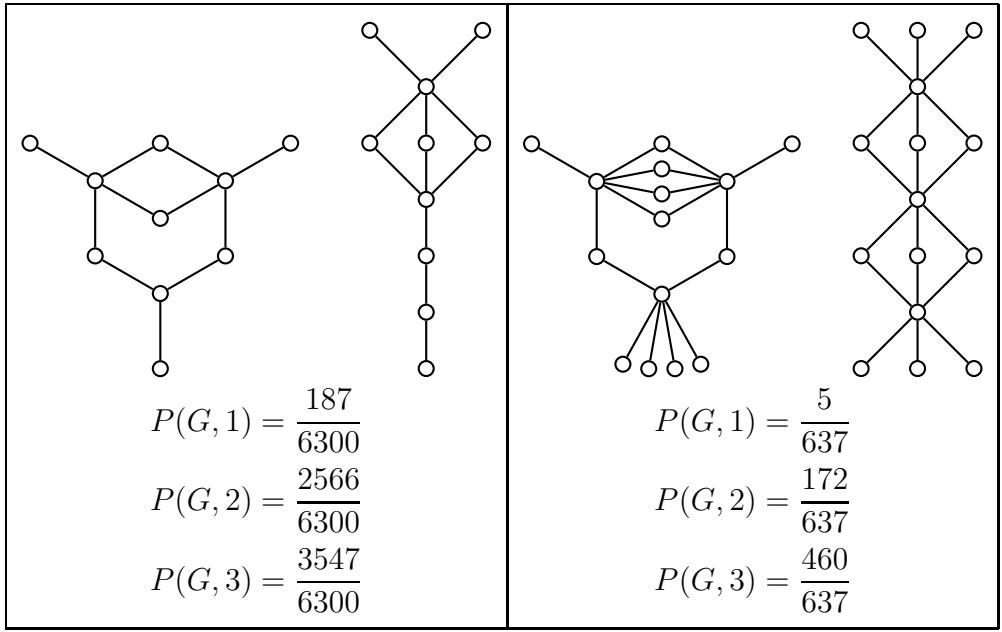

\centering

\begin{tabular}{|c|c|}\hline~&~\\[-8pt]
\FutureA~~~~~\FutureB&\FutureC~~~~~\FutureD\\ 
$\displaystyle\begin{array}{c}
\displaystyle P(G,1)=\frac{187}{6300}\\[10pt]
\displaystyle P(G,2)=\frac{2566}{6300}\\[10pt]
\displaystyle P(G,3)=\frac{3547}{6300}\\[10pt]\end{array}$&
$\displaystyle\begin{array}{c}
\displaystyle P(G,1)=\frac{5}{637}\\[10pt]
\displaystyle P(G,2)=\frac{172}{637}\\[10pt]
\displaystyle P(G,3)=\frac{460}{637}\\[10pt]\end{array}$\\ \hline
\end{tabular}

\caption{Two pairs of graphs with at most three trees and producing the same probabilities.}
\label{fig:three}
\end{figure}

\end{document}